\documentclass[11pt]{amsart}
\usepackage[english]{babel}
\usepackage{amsfonts,amsthm,latexsym,mathtools,amsmath,amssymb,enumitem,appendix,color,hyperref}
\usepackage{graphicx} 
\usepackage{comment}

\newtheorem{theorem}{Theorem}
\newtheorem{lemma}{Lemma}
\newtheorem{proposition}{Proposition}

\theoremstyle{definition} 
\newtheorem{definition}{Definition}[section]

\theoremstyle{remark}

\newtheorem{remark}{Remark}

\newcommand{\floor}[1]{\lfloor #1 \rfloor}

\DeclareMathOperator{\rcov}{\mathrm{r_{cov}}}

\begin{document}

\title{Billiard Lawn Mowers}

\author[N.~Sriprasert]{Natnaree Sriprasert}
\address[Natnaree Sriprasert]{}
\email{natnaree.sri@kkumail.com}

\author[S.~Warakkagun]{Sangsan Warakkagun}
\address[Sangsan Warakkagun]{Department of Mathematics, Khon Kaen University, Khon Kaen, Thailand}
\email{sangwa@kku.ac.th}

\date{\today}
\keywords{lawn mowing problem, billiards, covering radius}

\begin{abstract}
We study the Lawn Mowing Problem restricted to periodic billiard paths in the unit square. Given the combinatorial data of a trajectory, we determine the optimal covering radius, and identify the shortest path that covers the square for any fixed blade radius.
\end{abstract}
\subjclass[2020]{52C15, 51M04, 37D50}

\maketitle

\section{Introduction}
Imagine a circular lawn mower that travels in a straight line across a square yard, bouncing off the boundary with conservative reflection law like a billiard ball. For which such trajectories does the mower cover every point of the yard? Or given a fixed blade radius, what is the shortest trajectory guaranteed to mow the entire square?

These questions are inspired by the Lawn Mowing Problem of Arkin, Fekete, and Mitchell \cite{AFM}, in which the task is to find a shortest path whose neighborhood of fixed width covers a given planar region. 

The Lawn Mowing Problem is NP-hard in general. Even in a  $2 \times 2$ square, an \textit{optimal} route cannot have coordinates expressible in radicals \cite{FKPRS}. Restricting to billiard-like paths changes the picture. A trajectory starting off with irrational slope is dense, so in principle, such a path sweeps the whole square, but only after infinitely many bounces. We cannot wait forever. We therefore focus on periodic trajectories: those that return to their starting position after finitely many reflections. While these paths may not be the shortest since they generically self-intersect, they are practical, requiring little programmed input for they are completely determined by their initial configuration. Periodic trajectories have rational slopes and, as we will show, admit a clean geometric description. For such paths, we will solve our restricted Lawn Mowing Problem.

\section{The setup}
Throughout the paper, we think of the lawn as the unit square $\mathbf{T}=[0,1] \times [0,1]$, considered as a subset of $\mathbb{R}^2$. In our model, the center of the robotic mower is viewed as a point mass traveling along a \textit{trajectory}, a sequence of directed straight segments in $\mathbf{T}$, where each segment begins where the previous one ended, the angle of incidence equals the angle of reflection at each bounce, and the trajectory terminates if it meets a corner.

In this paper, we are interested in answering the following questions.
\begin{enumerate}[label=\textbf{Q\arabic*}]
\item \label{prob1}  What is the \emph{optimal covering radius} of a trajectory $\gamma$ 
\begin{equation*}
    \rcov(\gamma) := \inf\{ r>0: N_{r}(\gamma) \supset \mathbf{T} \},
\end{equation*}
that is, the minimal blade radius needed to cover $\mathbf{T}$ if $\gamma$ is followed indefinitely? Here, $N_r(\gamma)$ denotes the open $r$-neighborhood of $\gamma$. 
\item \label{prob2} Given a fixed radius $r$, what is the shortest billiard path $\gamma$ in $\mathbf{T}$ such that $N_{r}(\gamma) \supset \mathbf{T}$? 
\end{enumerate}

A trajectory is \emph{singular} if it meets some corner of $\mathbf{T}$. A non-singular trajectory $\gamma$ is said to be \emph{periodic} if the point returns to its starting point in the same initial position and direction after a finite number of bouncing points along the sides. 

Unless stated otherwise, periodic trajectories are \textit{primitive}, that is, they are traversed only once. The \textit{period} of a periodic trajectory is the number of bounces it makes before returning to the starting position, including the final one.

The \textit{orbit} of a trajectory $\gamma$ is the subset of $\mathbf{T}$ consisting of the points lying on $\gamma$. For brevity of notation, the orbit of $\gamma$ will also be denoted by $\gamma$, as the context will make it clear what we mean.

The remainder of this paper will give complete answers to \ref{prob1} and \ref{prob2} for periodic orbits in $\mathbf{T}$.

\section{Visualizing Periodic Orbits}

It is a well-known fact that non-singular trajectories in $\mathbf{T}$ with irrational slope do not terminate and have dense orbits in $\mathbf{T}$, see \cite{Tabachnikov}. For periodic orbits, we will give a complete description of their structure using the information of initial point on $\partial \mathbf{T}$ and direction. We summarize relevant facts from \cite{ChenOsinga} as needed; see the same reference for details.

\begin{definition}
    For $a \in (0,1)$ and relatively prime integers $p,q$, we let $\gamma(a,\frac{p}{q})$ be the trajectory beginning at the point $(a,0)$ on the side $[0,1]\times\{0\}$ of $\mathbf{T}$ with the initial angle $\arctan(\frac{p}{q})$, measured counterclockwise with respect to the positive $x$-axis. 
\end{definition}

\begin{theorem}
\label{thm: classification}\cite{ChenOsinga} The following statements hold for non-singular periodic trajectories in $\mathbf{T}$. 
\begin{enumerate}
    \item Every periodic trajectory in $\mathbf{T}$ has even period.
    \item Trajectories in $\mathbf{T}$ with period 2 are precisely the vertical and horizontal segments between opposite sides of $\mathbf{T}$. 
    \item  Every periodic trajectory of period at least 4 meets all four sides of $\mathbf{T}$, and the orbit of such trajectory coincides with that of some  $\gamma(a, \tfrac{p}{q})$, where $0 < a < 1$, $a \not \in \{ \frac{1}{p}, \ldots, \frac{p-1}{p}\}$, and $p,q$ are nonzero relatively prime positive integers.
\item The trajectory $\gamma(a, \frac{p}{q})$ is singular if and only if $a \in \left\{\tfrac{1}{p},\, \tfrac{2}{p},\, \ldots,\, \tfrac{p-1}{p}\right\}$. It is periodic otherwise.

\end{enumerate}
\end{theorem}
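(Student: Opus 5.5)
The natural tool is the unfolding of billiard trajectories in $\mathbf{T}$. I would reflect the square across its sides to obtain the tiling of $\mathbb{R}^2$ by unit squares, and use the folding map $\pi:\mathbb{R}^2\to\mathbf{T}$. This map is $2$-periodic in each coordinate and given by the tent map $t\mapsto \operatorname{dist}(t,2\mathbb{Z})$ in each coordinate. Under $\pi$, a billiard trajectory in $\mathbf{T}$ is the image of a straight line in $\mathbb{R}^2$. A bounce on a vertical side corresponds to the line crossing a vertical line $x\in\mathbb{Z}$, and similarly for horizontal sides. A trajectory is singular exactly when the line passes through a lattice point of $\mathbb{Z}^2$. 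Moreover, the state (position and direction) is recovered from the point of the line modulo the lattice $2\mathbb{Z}^2$, since the folding group is generated by reflections and $2\mathbb{Z}^2$ is exactly its translation subgroup acting trivially on directions. All four statements will be read off from this picture.

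First I would show that periodicity forces rational slope and even period. A non-singular trajectory is periodic precisely when the unfolded line $\ell(t)=x_0+tv$ returns to $x_0+w$ with $w\in 2\mathbb{Z}^2$, $w\neq 0$, and $w$ parallel to $v$. So the slope is rational (or $v$ is axis-parallel). Take the primitive $w=2(q,p)$ with $\gcd(p,q)=1$ after normalizing signs. Along one period, the segment from $x_0$ to $x_0+w$ crosses $2|q|$ vertical lines and $2|p|$ horizontal lines, and by non-singularity it never crosses two at once. The period is therefore $2(|p|+|q|)$, which is even; this gives (1). If $p=0$ or $q=0$, the line is horizontal or vertical and the period is $2$. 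Conversely, period $2$ forces $|p|+|q|=1$, which gives (2). If $p,q\neq 0$, the segment crosses both kinds of grid lines, and by parity considerations it crosses lines $x=k$ with $k$ both even and odd, which correspond under $\pi$ to the two different vertical sides; likewise for the horizontal sides. So the trajectory meets all four sides. Since it meets the bottom side at some point $(a,0)$, and reflecting directions (by symmetry of the orbit under reversal and the dihedral symmetry) lets us take the outgoing direction there to be $(q,p)$ with $p,q>0$, the orbit equals that of $\gamma(a,p/q)$. This gives (3), with the restriction on $a$ coming from (4).

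For (4), I would consider $\gamma(a,p/q)$ unfolded to the line $y=\frac{p}{q}(x-a)$, i.e. $qy=p(x-a)$. It hits a lattice point $(m,n)$ if and only if $pa = pm-qn$ for some integers $m,n$. Since $\gcd(p,q)=1$, $pm-qn$ ranges over all of $\mathbb{Z}$, so this holds if and only if $pa\in\mathbb{Z}$, i.e. $a\in\{\tfrac1p,\dots,\tfrac{p-1}{p}\}$ given $0<a<1$. Otherwise the line avoids the lattice. The translation by $2(q,p)$ preserves the line, so the trajectory returns to its initial state after finitely many bounces and is therefore periodic.

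The main obstacle is the bookkeeping in justifying that the state of the trajectory is determined exactly by the unfolded point modulo $2\mathbb{Z}^2$. Two things must hold: that no smaller translation of the line returns the state, and that returning to position implies returning to direction. I would handle this by writing the folding explicitly via the tent map in each coordinate. The direction at a point is then $(\pm q,\pm p)$, with signs given by the parities of $\lfloor x\rfloor$ and $\lfloor y\rfloor$, so both position and direction are $2\mathbb{Z}^2$-invariant functions of the unfolded point.
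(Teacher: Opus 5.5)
The paper does not prove Theorem~\ref{thm: classification}; it quotes it from \cite{ChenOsinga}, so there is no internal argument to compare with. Your unfolding proof is correct and self-contained. It also runs on exactly the machinery the paper introduces right afterwards: the ray $\ell(a,\tfrac pq)$, the folding map $\pi$ (your coordinatewise tent map $t\mapsto \mathrm{dist}(t,2\mathbb{Z})$), and the correspondence between lattice points and corners. The citation keeps the paper short. Your route makes the section self-contained, and your crossing count gives Theorem~\ref{thm: structure} for free: $p$ bounces on each horizontal side and $q$ on each vertical side. That is what the paper actually uses for $\mathcal{C}_B$ and $\mathcal{C}_L$; note that the wording of Theorem~\ref{thm: structure} has the two swapped.

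There are four small points to tighten.

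First, your closing sentence only records that position and direction are $2\mathbb{Z}^2$-invariant functions of the unfolded point. The exact period count in (1), and hence the converse in (2), needs the reverse implication: equal states at two unfolded points force their difference into $2\mathbb{Z}^2$. Your sign formula does give this off the grid lines. Equal horizontal direction components force $\lfloor x\rfloor\equiv\lfloor x'\rfloor \pmod 2$, and then equal tent values force $x'-x\in 2\mathbb{Z}$; likewise for $y$. This uses the direction essentially. Equality of the point of $\mathbf{T}$ alone is not enough, since the trajectory revisits points (for example self-intersections) with a different direction. So your phrase ``returning to position implies returning to direction'' is valid only in the unfolded, mod $2\mathbb{Z}^2$ sense.

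Second, in (4) you argue about the full line $qy=p(x-a)$, while $\gamma(a,\tfrac pq)$ unfolds only to the ray $y\ge 0$. Lattice points on the line come in translates by $(q,p)$, so the ray meets $\mathbb{Z}^2$ if and only if the line does; say so.

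Third, in (3) time reversal alone does the normalization. The two segments leaving a bottom bounce point $(a,0)$ have directions $(\pm q,p)$ with $p,q>0$. So either the trajectory or its reverse leaves $(a,0)$ along $(q,p)$, and both have the same orbit. Do not invoke the dihedral symmetries of $\mathbf{T}$ here, since they move the orbit to its image rather than preserving it.

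Fourth, state explicitly that period at least $4$ forces $p,q\neq 0$. If one of them vanishes, coprimality makes the other $\pm 1$ and the period $2$.
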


\begin{theorem}\label{thm: structure}
For $a \in (0,1)$ and relatively prime positive integers $p,q$, if the trajectory $\gamma(a,\frac{p}{q})$ is periodic in $\mathbf{T}$, then it has period $2(p+q)$, bouncing on each vertical side $p$ times and each horizontal side $q$ times. 
\end{theorem}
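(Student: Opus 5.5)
The plan is to unfold the billiard in $\mathbf{T}$ into straight-line motion in $\mathbb{R}^2$, tiled by reflected copies of $\mathbf{T}$, and then read off the bounces as crossings of the grid lines. Let $\rho\colon \mathbb{R}^2 \to \mathbf{T}$ be the folding map, the product of the $1$-dimensional folding $t \mapsto \operatorname{dist}(t, 2\mathbb{Z})$ in each coordinate. A billiard trajectory from $(a,0)$ is then $\rho$ applied to a straight ray $\ell$ starting at $(a,0)$. The bounces of $\gamma(a,\frac pq)$ correspond bijectively to the crossings of $\ell$ with the grid: a bounce on a vertical side is a crossing of a line $x \in \mathbb{Z}$, and a bounce on a horizontal side is a crossing of a line $y \in \mathbb{Z}$. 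Crossings at lattice points cannot occur, because Theorem \ref{thm: classification}(4) shows $\gamma$ is non-singular when it is periodic.

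The first step is closing time. The folded point, with its direction, returns to its initial state exactly when $\ell$ reaches a translate of $(a,0)$ by the group $2\mathbb{Z}\times 2\mathbb{Z}$ (the deck group of $\rho$ on states) with the same direction. Write the direction of $\ell$ as a primitive integer vector $(u,v)$. The horizontal and vertical displacements of $\ell$ stand in the fixed ratio $u:v$, and the pair $\{u,v\}=\{p,q\}$ is coprime. Hence the first return occurs after displacement exactly $(2u,2v)$, and no earlier one is possible. So the primitive orbit corresponds to the segment of $\ell$ from $(a,0)$ to $(a+2u,2v)$.

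The second step is counting crossings along this segment. It crosses $2u$ vertical lines $x\in\mathbb{Z}$, since $a\notin\mathbb{Z}$ ensures exactly $2u$ integers in $(a,a+2u]$. It crosses $2v$ horizontal lines $y \in \{1,\dots,2v\}$, where the final crossing at $y=2v$ is the return to the bottom side and is counted, as the paper's definition of period prescribes. So the period is $2(u+v)=2(p+q)$.

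The third step is parity. Consecutive vertical crossings alternate between $x$ even and $x$ odd, that is, between the sides $\{0\}\times[0,1]$ and $\{1\}\times[0,1]$. The same alternation holds for horizontal crossings. Therefore each vertical side is hit $u$ times and each horizontal side $v$ times.

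The main obstacle is pinning down which of $p,q$ is the horizontal component $u$, and it must come out as $u=p$ to give the stated $p$ bounces per vertical side. I would fix this by matching with the normalization in Theorem \ref{thm: classification}, which is taken from \cite{ChenOsinga} and which this theorem is meant to be compatible with. The idea is to determine which lattice points the unfolded line hits when $a$ lies in the singular set $\{\frac1p,\dots,\frac{p-1}p\}$; that set fixes the orientation of the direction vector relative to $(p,q)$.

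This check is delicate. If one reads ``angle $\arctan(p/q)$'' literally as slope $p/q$, then the line $y=\frac pq(x-a)$ passes through the lattice point $(m,n)$ iff $a = m - \frac{qn}{p}$. That is consistent with the singular set $\frac1p\mathbb{Z}$, but it gives $u=q$ and $v=p$ instead. So I would reconcile the angle convention of \cite{ChenOsinga} carefully before concluding; if the literal reading stands, the crossing counts would need to be reread against that source. Everything else in the argument is symmetric in $u$ and $v$.
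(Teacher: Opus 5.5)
Your unfolding argument is correct and matches the approach the paper uses. The paper gives no separate proof of this statement; it is background taken from the reference cited for Theorem~\ref{thm: classification}. Right afterwards, though, the paper unfolds $\gamma(a,\frac pq)$ to the ray $y=\frac pq(x-a)$ and folds it back with $\pi$. It then places the bottom bounces at $\pi(x_k,2k)$ with $x_k=a+\frac{2kq}{p}$, $k=0,\dots,p-1$, which is exactly your crossing count. Your closing-time step and the count $2(u+v)=2(p+q)$ are complete as written.

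Change course only at the last step. Do not look for a convention in the cited source that makes $u=p$, because none is compatible with this paper. The literal reading (slope $\frac pq$, direction $(q,p)$) is the paper's own convention, and your proposed test via Theorem~\ref{thm: classification}(4) confirms it. Direction $(q,p)$ gives the singular set $\{\frac1p,\dots,\frac{p-1}p\}$, while direction $(p,q)$ would give $\{\frac1q,\dots,\frac{q-1}q\}$. So $(u,v)=(q,p)$, and the correct conclusion is $q$ bounces on each vertical side and $p$ on each horizontal side. The statement as printed has ``vertical'' and ``horizontal'' interchanged. The rest of the paper agrees with you rather than with the printed statement:
\begin{itemize}
\item Lemma~\ref{lem: one-in-sub} uses that there are $p$ bouncing points on the bottom edge.
\item $\mathcal{C}_N(p)$ in Theorem~\ref{thm: covering radius} is defined by $p$ bounces on the bottom edge and $q$ on the left edge.
\end{itemize}
For a concrete check, take $\gamma(\frac14,\frac{2}{1})$, so $p=2$ and $q=1$. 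It bounces at $(\frac34,1)$, $(1,\frac12)$, $(\frac34,0)$, $(\frac14,1)$, $(0,\frac12)$, $(\frac14,0)$. That is twice on each horizontal side and once on each vertical side, for period $6=2(p+q)$. State this corrected version and your proof is finished.
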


After drawing some periodic orbits, the reflective symmetries of $\gamma(a,\frac{p}{q})$ with respect to the lines $x = \frac{i}{p}$ and $y = \frac{j}{q}$  become evident (see Figure \ref{fig:orbit}). This fact has not been addressed in earlier works on periodic orbits \cite{ChenOsinga, Dulio} and we cannot find a reference, so we include a proof here.

\subsection{Behaviors of bouncing points}

From now on, for any real number $a$, we use the standard notation $ \{a\}= a -\lfloor a \rfloor$ be its decimal part.

Let $a \in (0,1)$ and $p,q$ be relatively prime positive integers. By unfolding a periodic orbit $\gamma(a,\frac{p}{q})$ in $\mathbb{R}^2$, that is, reflecting the square about the edge of a bouncing point every time the trajectory hits a side so that it continues in a straight line, we get the ray $\ell(a,\frac{p}{q}): y=\frac{p}{q}(x-a), ~y\geq 0$. When we project $\ell = \ell(a,\frac{p}{q})$ under the map $\pi:\ell \to \mathbf{T}$ defined by
$$\pi(x,y) = \begin{cases}
    (\{x\}, \{y\}), & \text{both $\floor{x}, \floor{y}$ are  even}\\
    (\{x\}, 1-\{y\}), & \text{$\floor{x}$ is even, $\floor{y}$ is odd}\\
    (1-\{x\}, \{y\}), & \text{$\floor{x}$ is odd, $\floor{y}$ is even}\\
    (1-\{x\}, 1-\{y\}), & \text{both $\floor{x}, \floor{y}$ are  odd},\\
\end{cases}$$
the image $\pi(\ell)$ is precisely the orbit of $\gamma(a,\frac{p}{q})$ in $\mathbf{T}$.

\begin{figure}
    \centering
\includegraphics[width=0.5\linewidth]{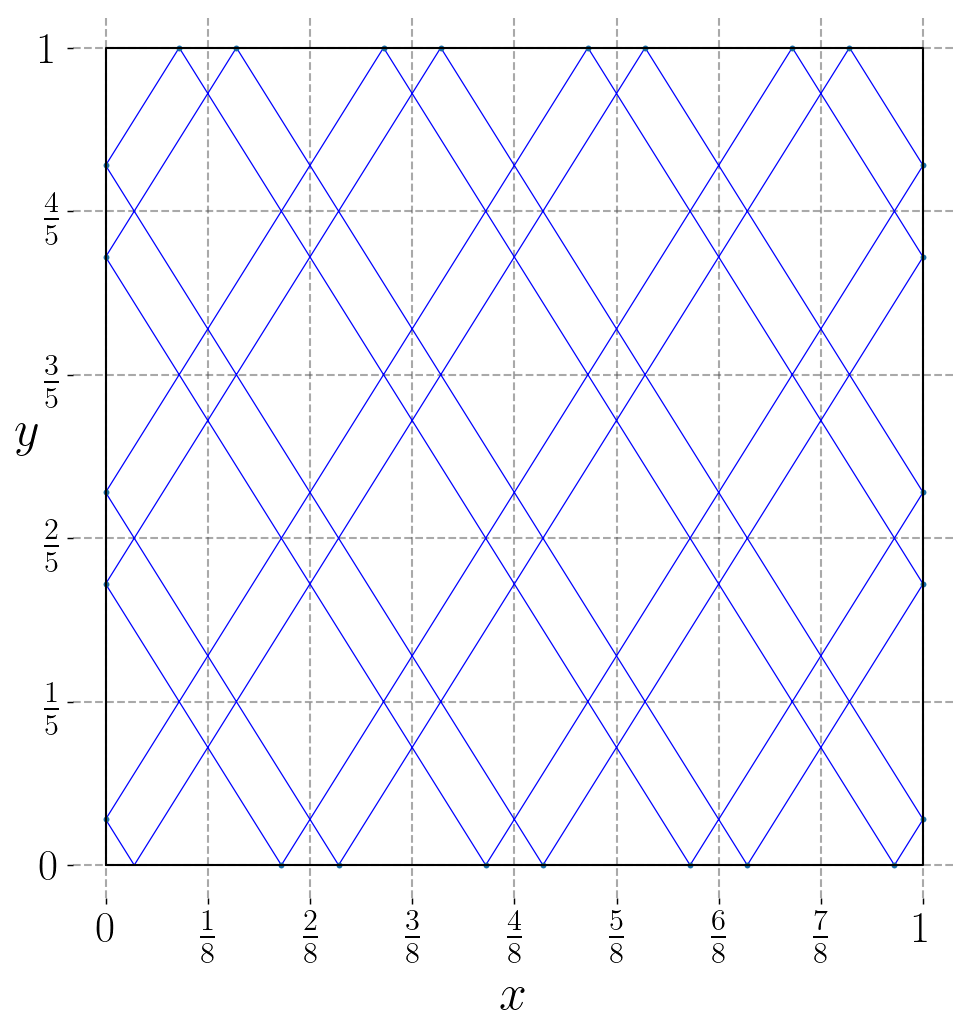} \quad\includegraphics[width=0.45\linewidth]{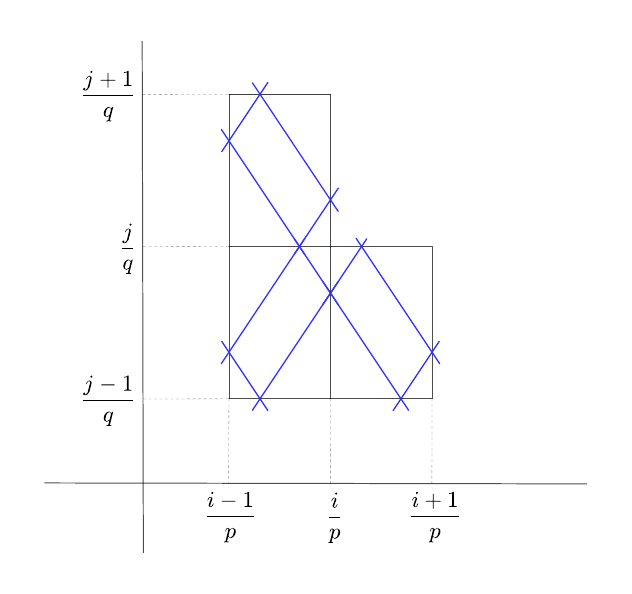}

    \caption{The orbit of $\gamma(0.02, 8/5)$ (left) and the schematic showing reflective symmetries along the grid lines of $W_{i,j}$ (right).} 
    \label{fig:orbit}
\end{figure}

The main result of this section is to show that the orbit $\gamma(a, \frac{p}{q})$ is symmetric in the following sense.

\begin{proposition}[Orbit symmetry]\label{prop: symmetry}
    Given the set up above, let $W_{i,j} = \left[ \frac{i-1}{p}, \frac{i}{p} \right] \times \left[ \frac{j-1}{q} , \frac{j}{q}\right] $ be a subrectangle of $\mathbf{T}$ for every $i \in \{1,\ldots, p\}$ and $j \in \{1,\ldots, q\}$. Then,  $\pi(\ell)\cap W_{i,j}$ is a parallelogram for any $i,j$. Moreover,
    \begin{enumerate}
        \item $\pi(\ell)\cap W_{i,j}$ and $\pi(\ell)\cap W_{i+1,j}$ are symmetric about the line $x =\frac{i}{p}$.
        \item $\pi(\ell)\cap W_{i,j}$ and $\pi(\ell)\cap W_{i,j+1}$ are symmetric about the line $y =\frac{j}{q}$.
    \end{enumerate}

\end{proposition}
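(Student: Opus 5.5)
The plan is to lift everything to $\mathbb{R}^2$ and work with the full preimage of the orbit under $\pi$. That preimage turns out to be a union of lines invariant under the reflections in question. Let $G$ be the group generated by the reflections of $\mathbb{R}^2$ in the lines $x=k$ and $y=k$ for $k\in\mathbb{Z}$. By construction $\pi\circ g=\pi$ for every $g\in G$, and $\pi$ restricted to $\mathbf{T}$ is the identity.

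\emph{Step 1 (the preimage is a union of lines).} By periodicity (Theorem \ref{thm: classification}), $\gamma(a,\frac pq)$ closes up after finitely many bounces. In the unfolded picture this means $\ell$ eventually reaches a point $G$-equivalent to its starting configuration. Hence $\pi(\ell)=\pi(\tilde\ell)$, where $\tilde\ell$ is the full line $y=\frac pq(x-a)$, and
\[
\pi^{-1}(\pi(\ell))=S:=\bigcup_{g\in G} g(\tilde\ell), \qquad \pi(\ell)=S\cap\mathbf{T}.
\]
I would describe $S$ explicitly. Reflections in vertical and horizontal lines send slope $\frac pq$ to slope $-\frac pq$. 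Composing two of them gives the translations by $(2,0)$ and $(0,2)$. The translation by $(0,2)$ shifts the $x$-intercept by $2q/p$. Since $\gcd(p,q)=1$, the shifts $2qj/p \bmod 2$ exhaust $\frac 2p\mathbb{Z} \bmod 2$. A short computation should then give
\[
S=\Big\{\,y=\pm\tfrac pq(x-c)\;:\; c\in \big(a+\tfrac 2p\mathbb{Z}\big)\cup\big(-a+\tfrac 2p\mathbb{Z}\big)\Big\}.
\]
In other words, both slope families have the same intercept set $C=\pm a+\frac2p\mathbb{Z}$.

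\emph{Step 2 (symmetry).} I would check directly that the reflections $\rho_i$ in $x=\frac ip$ and $\sigma_j$ in $y=\frac jq$ preserve $S$.

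For $\rho_i$: the line $y=\frac pq(x-c)$ maps to $y=-\frac pq(x-(\frac{2i}{p}-c))$. Moreover $\frac{2i}p-c\in C$ whenever $c\in C$, because $C=-C$ and $C$ is invariant under translation by $\frac 2p\mathbb{Z}$.

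For $\sigma_j$: the line $y=\frac pq(x-c)$ maps to $y=\frac{2j}{q}-\frac pq(x-c)=-\frac pq\big(x-(c+\frac{2j}{p})\big)$, and again $c+\frac{2j}p\in C$. The slope $-\frac pq$ family is handled the same way.

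Since $\rho_i(W_{i,j})=W_{i+1,j}$ and $\sigma_j(W_{i,j})=W_{i,j+1}$, we get $\rho_i(S\cap W_{i,j})=S\cap W_{i+1,j}$ and similarly for $\sigma_j$. By Step 1 this yields (1) and (2).

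\emph{Step 3 (parallelogram shape).} A line of slope $\pm\frac pq$ rises $\frac1q$ over horizontal distance $\frac1p$. So it meets the interior of $W_{i,j}$ exactly when its intercept lies in a certain open window of length $\frac 2p$. The classes $a+\frac2p\mathbb{Z}$ and $-a+\frac2p\mathbb{Z}$ each contribute exactly one intercept to such a window, away from the excluded singular values $a\in\frac1p\mathbb{Z}$ and $a=\frac12\cdot\frac 2p k$-type coincidences. Hence $S\cap W_{i,j}$ consists of two parallel segments of slope $\frac pq$ and two of slope $-\frac pq$, which together outline a parallelogram clipped to $W_{i,j}$. Given Step 2, it suffices to verify this for a single $W_{1,1}$.

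The main obstacle is Step 1. One must justify carefully that the $G$-orbit of the single ray $\ell$ gives the whole line family with intercept set exactly $C$, neither more nor less, and that $\pi(\ell)$ is all of $S\cap\mathbf{T}$. I would do this by exhibiting the translation by $(2q,2p)\in G$ along $\tilde\ell$. This shows that the ray already covers a fundamental period of the line, and the remaining claims then reduce to the intercept bookkeeping above.
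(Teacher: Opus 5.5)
Steps 1 and 2 are correct, and your route differs from the paper's. Step 3, however, asserts the parallelogram shape without checking where the lines cross, and that check is the content of the claim.

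Step 1 is less of an obstacle than you suggest. The translation by $(2q,2p)\in G$ preserves $\tilde\ell$, so $\pi(\ell)=\pi(\tilde\ell)$ without invoking periodicity. Also $\pi(w)\in Gw$ for every $w$, because each coordinate of $\pi$ is $x\mapsto d(x,2\mathbb{Z})$; this gives $\pi(\ell)=S\cap\mathbf{T}$. Your intercept set $C=\pm a+\frac2p\mathbb{Z}$ for both slope families is right. Its invariance under $c\mapsto\frac{2i}{p}-c$ and $c\mapsto c+\frac{2j}{p}$ proves (1) and (2).

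\textbf{The gap in Step 3.} Knowing that exactly two lines of each slope meet the interior of $W_{1,1}$ does not by itself make $S\cap W_{1,1}$ a parallelogram. The lines $y=\frac pq(x-c)$ and $y=-\frac pq(x-d)$ cross at $\bigl(\frac{c+d}{2},\frac{p}{2q}(d-c)\bigr)$. For many choices of $c\in(-\frac1p,\frac1p)$ and $d\in(0,\frac2p)$ (e.g.\ $c=0$, $d=\frac1p$) this is an interior point of the cell. You would then see a smaller parallelogram with tails running out to $\partial W_{1,1}$.

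The proposition asserts, and Lemma~\ref{lem: parallelogram} and Theorem~\ref{thm: covering radius} later use, that all four crossings lie on $\partial W_{1,1}$, one on each side. Your phrase ``a parallelogram clipped to $W_{i,j}$'' leaves exactly this unproved.

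It is easy to close with your own data:
\begin{enumerate}
\item Since $a\notin\frac1p\mathbb{Z}$, write $C=\pm t+\frac2p\mathbb{Z}$ with $t\in(0,\frac1p)$.
\item The slope-$+$ window $(-\frac1p,\frac1p)$ then contains exactly $\pm t$.
\item The slope-$-$ window $(0,\frac2p)$ contains exactly $t$ and $\frac2p-t$.
\item The four crossings are therefore
\[
(t,0),\qquad \Bigl(0,\tfrac{pt}{q}\Bigr),\qquad \Bigl(\tfrac1p,\tfrac{1-pt}{q}\Bigr),\qquad \Bigl(\tfrac1p-t,\tfrac1q\Bigr).
\]
\end{enumerate}
These lie one on each side, so each line meets $W_{1,1}$ exactly in the segment joining two consecutive vertices. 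Since $pt\in\{\{pa\},\,1-\{pa\}\}$, this also produces the corner distances used in Theorem~\ref{thm: covering radius}.

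\textbf{Comparison with the paper.} With that repair, your argument is a complete proof by a different route. The paper argues locally:
\begin{enumerate}
\item By congruence arguments it shows the $p$ bottom bounce points are distinct, lie one per subinterval, and alternate in parity. Adjacent ones are mirror images about $x=\frac ip$ (Lemmas~\ref{lem: different}--\ref{lem: alternate}).
\item It builds the bottom-row parallelograms from the segments $\lambda_{B,i,\pm}$, getting the top vertex at height $\frac1q$ from $h_{i+1}-h_{i-1}=\frac2p$.
\item It inducts on rows, treating the crossings with $y=\frac jq$ as new bottom bounce points.
\end{enumerate}

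Your global description of $\pi^{-1}(\pi(\ell))$ makes the symmetries a one-line invariance of $C$ and removes the induction. It replaces the paper's ``otherwise there would be more bouncing points'' step with an exact count. It also recovers the content of Lemmas~\ref{lem: different}--\ref{lem: alternate} at once: $C$ meets each $(\frac kp,\frac{k+1}p)$ in the single point $\frac kp+t$ or $\frac{k+1}p-t$, according to the parity of $k$. The paper's approach, in exchange, stays in the language of bounce points on $\partial\mathbf{T}$, which is how its drawing procedure is phrased.
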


We will prove Proposition \ref{prop: symmetry} via a series of lemmas. First, we will describe the structure of the bouncing points along the bottom side of $\mathbf{T}$. 

Suppose $\gamma(a,\frac{p}{q})$ is periodic. For $k \in \{0,1,\ldots, p-1\}$, define $x_k = 2k \frac{q}{p}+a$. By unfolding $\gamma(a,\frac{p}{q})$ in $\mathbb{R}^2$, we have that the points $\pi (x_k,2k)$, where $k\in \{0,1,\ldots, p-1\}$ are precisely the bouncing points along the bottom side of $\mathbf{T}$. Since $\gamma$ is non-singular, these bouncing points are not in the set $\left\{ \left(\frac{k}{p}, 0\right): k=1,\ldots, p-1 \right\}$ by Theorem \ref{thm: classification}.

\begin{lemma}\label{lem: different}
The points $\pi (x_k,2k)$, where $k\in \{0,1,\ldots, p-1\}$, are all distinct bouncing points on the bottom side of $\mathbf{T}$.
\end{lemma}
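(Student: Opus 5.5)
We need to show that the $p$ points $\pi(x_k,2k)$ for $k=0,\dots,p-1$ are pairwise distinct. Since $\floor{2k}$ is even and $\{2k\}=0$, the $y$-coordinate is $0$ in every case, so each point lies on the bottom side. Its $x$-coordinate is $\{x_k\}$ if $\floor{x_k}$ is even and $1-\{x_k\}$ if $\floor{x_k}$ is odd. This is the ``tent'' folding $f(x)=\operatorname{dist}(x,2\mathbb{Z})$ applied to $x_k$. So the statement reduces to showing that $f(x_j)\neq f(x_k)$ for $j\neq k$ in $\{0,\dots,p-1\}$.

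The key observation is that $f(x)=f(y)$ holds if and only if $x-y\in 2\mathbb{Z}$ or $x+y\in 2\mathbb{Z}$. Indeed, $f$ is the quotient map of $\mathbb{R}$ by the group generated by $x\mapsto x+2$ and $x\mapsto -x$, whose elements are $x\mapsto \pm x+2m$. Suppose $f(x_j)=f(x_k)$ with $0\le j<k\le p-1$; there are two cases.

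First, suppose $x_k-x_j=2(k-j)\frac{q}{p}\in 2\mathbb{Z}$. Then $p\mid (k-j)q$. Since $\gcd(p,q)=1$, this gives $p\mid (k-j)$, which is impossible because $0<k-j<p$.

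Second, suppose $x_k+x_j=2(k+j)\frac{q}{p}+2a\in 2\mathbb{Z}$. Then $a\equiv -(k+j)\frac{q}{p} \pmod 1$, so $a=\frac{m}{p}$ for some integer $m$. Because $0<a<1$, we have $m\in\{1,\dots,p-1\}$. By Theorem \ref{thm: classification}(4), this would make $\gamma(a,\frac pq)$ singular, contradicting periodicity. This second case is the only point that needs the non-singularity hypothesis; it corresponds to two bounces placed symmetrically about a forbidden point $\frac{m}{p}$.

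With both cases excluded, the $p$ points are pairwise distinct. Finally, Theorem \ref{thm: structure} says the trajectory makes exactly $p$ bounces on the bottom side, and the unfolding shows that these occur exactly at the unfolded heights $y=2k$ within one period. Hence the $p$ distinct points are precisely all the bottom bouncing points.
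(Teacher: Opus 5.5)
Your proof is correct and uses the same case split as the paper. The condition $x_k-x_j\in 2\mathbb{Z}$ is the paper's case of equal floor parity with $\{x_k\}=\{x_l\}$. The condition $x_k+x_j\in 2\mathbb{Z}$ is its case of opposite parity with $\{x_k\}=1-\{x_l\}$. The folding map $\mathrm{dist}(\cdot,2\mathbb{Z})$ simply handles both at once.

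The real difference is in the second case, and there your argument is the sound one. The paper writes $p(\{x_k\}+\{x_l\})-p=2(k+l)q-p(\lfloor x_k\rfloor+\lfloor x_l\rfloor+1)+2a$. This drops a factor of $p$: the last term should be $2pa$. The paper then derives a contradiction from ``$2a$ is an even integer'' without ever using non-singularity. With the correct term one only gets $pa\in\mathbb{Z}$. That is excluded exactly by Theorem~\ref{thm: classification}(4), as you say. The hypothesis cannot be dropped: for the singular choice $p=2$, $q=1$, $a=\tfrac12$ one has $\pi(x_0,0)=\pi(x_1,2)=(\tfrac12,0)$.

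A minor remark on your last paragraph: you do not need Theorem~\ref{thm: structure} there. As printed, it also has $p$ and $q$ interchanged in the bounce counts. The unfolding alone shows that one period corresponds to $0\le y<2p$, and the crossings of even heights in that range are exactly $y=2k$ for $k=0,\dots,p-1$.
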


\begin{proof} 
The second coordinates clearly project to $0$, so we will only check the image of the first coordinate. First, suppose $\floor{x_k} \equiv \floor{x_l} \pmod{2}$ and $\{x_k\} = \{x_l\}$, where $k,l \in \{0,1,\ldots, p-1\}$. Then, $$0 = p\cdot|\{x_k\} -\{x_l\}| = |2(k-l)q - p(\floor{x_k}-\floor{x_l})|.$$ Since $\floor{x_k}-\floor{x_l}$ is an even integer, $p$ divides $(k-l)q$. Since $\gcd(p,q)=1$, we must have $k = l$. 

Suppose instead $\floor{x_k} \not \equiv \floor{x_l} \pmod{2}$ and $\{x_k\} = 1-\{x_l\}$ for some $k,l \in \{0,1,\ldots, p-1\}$. Then,
\begin{align*}
    0&= p\cdot|\{x_k\} - (1-\{x_l\})| \\
    &= |p(\{x_k\}+\{x_\ell\})-p| \\
    &= |(k+l)2q - p(\floor{x_k}+\floor{x_l}) -p +2a| \\
    &= |2(k+l)q - p(\floor{x_k}+\floor{x_l}+1)+2a|,
\end{align*}
forcing $2a$ to be an even integer. But $0 < a< 1$, so $2a$ cannot be an even integer, a contradiction. 

Thus, $\pi(x_k,2k)$ for $k=0,1,\ldots, p-1$, are all distinct.
\end{proof}

\begin{lemma}\label{lem: one-in-sub}
There is exactly one bouncing point in the subinterval $\left( \frac{i}{p}, \frac{i+1}{p} \right) \times \{0\}$ of the bottom edge.
\end{lemma}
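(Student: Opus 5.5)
We combine a pigeonhole count with Lemma \ref{lem: different}, so the real work is an injectivity statement. By Lemma \ref{lem: different} there are exactly $p$ distinct bouncing points $\pi(x_k,2k)$, $k=0,\ldots,p-1$, on the bottom side. By Theorem \ref{thm: classification}(4), none of them lies at $(\tfrac{m}{p},0)$ for $m=1,\ldots,p-1$. None lies at a corner either, since the trajectory is non-singular. So each bouncing point lies in exactly one of the $p$ open subintervals $\left(\tfrac{i}{p},\tfrac{i+1}{p}\right)\times\{0\}$, $i=0,\ldots,p-1$. It therefore suffices to show that no two of them lie in the same subinterval: $p$ points in $p$ boxes with at most one per box gives exactly one per box.

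To do this, I will compute explicitly which subinterval contains each point. The first coordinate of $\pi(x_k,2k)$ is $\{x_k\}$ if $\lfloor x_k\rfloor$ is even and $1-\{x_k\}$ if it is odd. Writing $x_k = \tfrac{2kq + pa}{p}$, the subinterval index of $\{x_k\}$ is $\lfloor p\{x_k\}\rfloor = \lfloor 2kq + pa\rfloor - p\lfloor x_k\rfloor$. This is congruent to $2kq + \lfloor pa\rfloor$ modulo $p$, and it lies in $\{0,\ldots,p-1\}$. Let $c=\lfloor pa\rfloor$. In the even case the index is the residue of $c+2kq$ mod $p$. In the odd case the point $1-\{x_k\}$ has index $p-1-\lfloor p\{x_k\}\rfloor$, which is congruent to $-1-c-2kq$ mod $p$. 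Here we use that $p\{x_k\}$ is not an integer, which holds because the point avoids the grid points $\tfrac{m}{p}$. So every index has the form $\pm(c+2kq)$ shifted by a constant depending only on parity, and the task reduces to a congruence argument mod $p$.

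Suppose two indices $k\neq l$ give the same subinterval, and split into cases as in Lemma \ref{lem: different}. If $\lfloor x_k\rfloor$ and $\lfloor x_l\rfloor$ have the same parity, we get $2(k-l)q\equiv 0 \pmod p$. More carefully, the equation is $2(k-l)q = p(\lfloor x_k\rfloor-\lfloor x_l\rfloor) + (\text{difference in }(-p,p))$. Together with the fact that the fractional parts differ by less than $1/p$, this forces an integer of absolute value less than $1$ to vanish. Then the parity argument from Lemma \ref{lem: different} gives $p\mid (k-l)q$, hence $k=l$. If the parities differ, the two indices coincide exactly when $c+2kq \equiv -1-c-2lq \pmod p$, that is, when $2(k+l)q + 2c+1\equiv 0$. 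A cleaner route is to work directly with the unfolded coordinates. Same subinterval with opposite parity means $\{x_k\}+\{x_l\}$ lies within $1/p$ of $1$. Multiplying by $p$ gives $|2(k+l)q + 2pa - p(\lfloor x_k\rfloor+\lfloor x_l\rfloor+1)| < 1$. Since the first and last terms are integers, the quantity $2pa$ lies within distance less than $1$ of an integer $N$.

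I expect this last case to be the main obstacle. Unlike the equality case in Lemma \ref{lem: different}, closeness within $1/p$ does not immediately contradict anything, and I will need to extract a parity contradiction. My first attempt is to use that $N = p(\lfloor x_k\rfloor+\lfloor x_l\rfloor+1) - 2(k+l)q$ has a forced parity, because $\lfloor x_k\rfloor+\lfloor x_l\rfloor+1$ is even. Combined with $pa\notin\mathbb Z$, this should show that $\{x_k\}$ and $1-\{x_l\}$ fall on opposite sides of some grid point $\tfrac{m}{p}$. Failing this, I fall back to symmetry: reflecting the unfolded ray about $x=\tfrac{m}{p}$-type lines pairs the bouncing points so that $\{x_k\}$ and $1-\{x_l\}$ lie in mirror subintervals, which are distinct.
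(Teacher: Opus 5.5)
There is a genuine gap, and it sits in the opposite-parity case, which is the real content of the lemma. Your pigeonhole reduction and your same-parity case are fine and agree with the paper: there $p(\{x_k\}-\{x_l\})$ is an integer of absolute value less than $1$, and the argument of Lemma~\ref{lem: different} finishes. The opposite-parity case you leave at ``this should show'' plus a fallback.

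The inequality you actually derive cannot produce a contradiction. You weakened ``$\{x_k\}$ and $1-\{x_l\}$ lie in the same open subinterval'' to ``they are within $1/p$ of each other''. The weaker condition holds for genuine pairs of bouncing points straddling a grid point. For instance, take $p=2$, $q=1$, $a=0.4$. Then $x_0=0.4$ and $x_1=1.4$, the bouncing points are $0.4$ and $0.6$, and they lie in different subintervals. Yet your quantity is $|2+1.6-4|=0.4<1$, i.e.\ $|2pa-N|<1$ with $N=2$ even.

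In general, if $2pa\notin\mathbb{Z}$, one of $\lfloor 2pa\rfloor,\lceil 2pa\rceil$ is even and lies within distance $1$ of $2pa$. So ``$N$ even and $|2pa-N|<1$'' is not contradictory. The combination $\{x_k\}+\{x_l\}$ keeps the term $2a$, which is exactly what you cannot control. Your earlier reduction of the index equation mod $p$ loses information in the same way, because it discards the floor terms that carry the parity.

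The missing idea is to use exact membership in one cell, through a combination in which $a$ cancels. The paper writes $\{x_k\}=\tfrac{i}{p}+\epsilon_k$ and $1-\{x_l\}=\tfrac{i}{p}+\epsilon_l$. It then notes that $\{x_k\}+(1-\{x_l\})=\tfrac{2(k-l)q}{p}-(\lfloor x_k\rfloor-\lfloor x_l\rfloor-1)$ lies in $\tfrac{2}{p}\mathbb{Z}$, because the floor difference is odd. Hence $p(\epsilon_k+\epsilon_l)$ is an even integer in $(0,2)$, which is impossible.

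This computation says that an even and an odd bouncing point are always mirror images about some $\tfrac{m}{p}$. That is what your symmetry fallback gestures at, but it must be proved by this computation. It cannot be quoted from the later symmetry results, which rest on the present lemma.

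Your own machinery also closes the case if you keep equalities instead of congruences. The indices are exactly $2kq+c-p\lfloor x_k\rfloor$ and $p-1-2lq-c+p\lfloor x_l\rfloor$. Equating them gives $2(k+l)q+2c+1=p(\lfloor x_k\rfloor+\lfloor x_l\rfloor+1)$, which is odd on the left and even on the right.

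Equivalently, your ``first attempt'' can be rescued. Same-cell membership forces $p\{x_k\}-p(1-\{x_l\})=2\{pa\}-1$ exactly. Then your $N$ equals $2\lfloor pa\rfloor+1$, which is odd and contradicts the parity you observed.
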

\begin{proof}
For contradiction, suppose that there are two distinct bouncing points in $\left( \frac{i}{p}, \frac{i+1}{p} \right) \times \{0\}$. If $\floor{x_k} \equiv \floor{x_l} \pmod{2}$ with
$\{x_k\} = \frac{i}{p} + \epsilon_k$ and  $\{x_l\} = \frac{i}{p}+\epsilon_l$ where $0 < \epsilon_k, \epsilon_l < \frac{1}{p}$. Then, 
\begin{align*}
    |\epsilon_k - \epsilon_l| &= \left|\left(x_k-\floor{x_k} - \frac{i}{p}\right) - \left(x_l-\floor{x_l}-\frac{i}{p}\right)\right| \\
    &= |2(k-l)q/p - (\floor{x_k} - \floor{x_l})|
\end{align*}
and so $p|\epsilon_k - \epsilon_l| \in \mathbb{Z}$, making $\epsilon_k = \epsilon_l$ and thus $\{x_k\} = \{x_\ell\}$ by Lemma 1, a contradiction.

If instead $\floor{x_k} \not\equiv \floor{x_l} \pmod{2}$ such that $\{x_k\} = \frac{i}{p} + \epsilon_k$ and $1-\{x_l\} = \frac{i}{p}+\epsilon_l$, where $0 < \epsilon_k, \epsilon_l < \frac{1}{p}$, then consider
    \begin{align*}
    \epsilon_k + \epsilon_l &=\{x_k\} + (1-\{x_l\}) -2/p\\ & =(x_k -\floor{x_k}) - (x_l - \floor{x_l})+1 -2/p \\
    &= \frac{2(k-l)q}{p}  - \left(\floor{x_k} - \floor{x_l}+1\right) -2/p\
    \end{align*}
Then, $p(\epsilon_k+\epsilon_l)$ is an even integer, forcing $\epsilon_k = \epsilon_l = 0$, again a contradiction.

Therefore, since there are $p$ bouncing points, there must be exactly one bouncing point in each subinterval $\left( \frac{i}{p}, \frac{i+1}{p} \right) \times \{0\}$.
\end{proof}

Next, we show that the bouncing points are aligned in a specifically symmetric way. We say a bouncing point $\pi(x_k,k)$ is even if $\floor{x_k}$ is even and say it is odd otherwise.

\begin{lemma}\label{lem: alternate}
With the bottom side of $\mathbf{T}$ divided into $p$ subintervals of equal length, the following statements hold about bouncing points.

\begin{enumerate}
    \item Bouncing points in adjacent subintervals  cannot be of the same parity. That is, if $\pi(x_k, 2k) \in \left(\frac{i-1}{p}, \frac{i}{p}\right) \times \{0\}$ and $\pi(x_l, 2l) \in \left(\frac{i}{p}, \frac{i+1}{p}\right) \times \{0\}$, then $\floor{x_k} \not \equiv \floor{x_l} \pmod{2}$.
    \item  Two bouncing points in  adjacent subintervals of the bottom side are symmetric about the perpendicular bisector $x =\frac{i}{p}$ for some $i \in \{1, \ldots, p-1\}$
\end{enumerate}
\end{lemma}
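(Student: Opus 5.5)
The plan is to describe the bottom bouncing points explicitly in terms of residues modulo $2p$ and read both statements off that description. Write $x_k = \frac{2kq}{p} + a$, so that $p x_k = 2kq + pa$. As $k$ runs over $\{0,\ldots,p-1\}$ and $\gcd(p,q)=1$, the residues $2kq \bmod 2p$ run over all even residues $\{0,2,\ldots,2p-2\}$ exactly once. Reducing $x_k$ modulo $2$, the points $x_k \bmod 2$ are therefore exactly $\frac{2m}{p} + a \pmod 2$ for $m=0,\ldots,p-1$.

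Next we apply the folding. The map $x\mapsto$ (first coordinate of $\pi$) is the tent map of period $2$: it sends $t\in[0,1]$ to $t$ and $t\in[1,2]$ to $2-t$. Each $\frac{2m}{p}+a \pmod 2$ lands in some interval $\left[\frac{j}{p},\frac{j+1}{p}\right]$ of $[0,2]$, with $a$ fixed in $\left(\frac{s}{p},\frac{s+1}{p}\right)$, where $s=\lfloor pa\rfloor$; this uses non-singularity, i.e.\ $a\notin\frac1p\mathbb Z$.
\begin{itemize}
\item Points with $\floor{x_k}$ even lie in $[0,1)$. They are $\frac{j}{p}+\delta$ with $\delta = a-\frac{s}{p}\in(0,\frac1p)$, for all $j\equiv s \pmod 2$, $0\le j\le p-1$.
\item Points with $\floor{x_k}$ odd lie in $[1,2)$ and fold to $2-\left(\frac{j}{p}+\delta\right)$ with $j\equiv s\pmod 2$ and $p\le j\le 2p-1$. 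Writing $j' = 2p-1-j$, the image is $\frac{j'}{p} + \left(\frac1p-\delta\right)$, and $j'$ ranges over $0,\ldots,p-1$ with $j'\equiv s+1 \pmod 2$.
\end{itemize}

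So in subinterval number $j$ (that is, $(\frac{j}{p},\frac{j+1}{p})$), the unique bouncing point from Lemma~\ref{lem: one-in-sub} is even with offset $\delta$ when $j\equiv s$, and odd with offset $\frac1p-\delta$ when $j\not\equiv s \pmod 2$. This gives part (1) immediately, since adjacent $j$ have opposite parity.

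For part (2), take adjacent subintervals $j=i-1$ and $j=i$. One point is at $\frac{i-1}{p}+\delta$ and the other at $\frac{i}{p}+\frac1p-\delta$, or the roles are swapped: $\frac{i-1}{p}+\frac1p-\delta$ and $\frac{i}{p}+\delta$. In the first case both points lie at distance $\frac1p-\delta$ from $\frac{i}{p}$, and in the second case both lie at distance $\delta$. Either way they are mirror images about $x=\frac{i}{p}$. We need $\delta\ne\frac1{2p}$ only to ensure distinctness, and that is already guaranteed by Lemma~\ref{lem: different}.

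The main obstacle is bookkeeping. We must check that the even residue classes modulo $2p$ map correctly under $\pi$, and that exactly the parity-$s$ slots in $[0,1)$ and the complementary ones in $[1,2)$ occur. The count $p$ of even residues matches: $\lceil\cdot\rceil$ and $\lfloor\cdot\rfloor$ split of $j\equiv s$ between $[0,p-1]$ and $[p,2p-1]$, whose reflections fill the complementary parity. Lemma~\ref{lem: one-in-sub} serves as a consistency check and can shortcut this counting if needed.
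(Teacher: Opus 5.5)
Your proof is correct, but it takes a different route from the paper's.

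The paper argues pairwise. For two same-parity bouncing points, $p$ times their distance is $|2(k-l)q - p(\lfloor x_k\rfloor - \lfloor x_l\rfloor)|$, an even integer, so the points are at least $\frac{2}{p}$ apart and cannot occupy adjacent subintervals. For an even point and an odd point, $p$ times the sum $\{x_k\} + (1-\{x_l\})$ of their folded positions is also an even integer. Their midpoint therefore lies in $\frac{1}{p}\mathbb{Z}$, and when the subintervals are adjacent it must be the shared endpoint $\frac{i}{p}$.

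You instead determine the whole configuration at once, using the permutation $k \mapsto kq \bmod p$ and the period-$2$ tent map. You show that the point in subinterval $j$ is $\frac{j}{p}+\delta$ (even) or $\frac{j+1}{p}-\delta$ (odd), according to the parity of $j-s$, where $\delta = \frac{\{pa\}}{p}$.

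The paper's argument is shorter and never has to locate individual points. Yours costs some residue bookkeeping, which you carry out correctly, but it gives more. It reproves Lemmas~\ref{lem: different} and~\ref{lem: one-in-sub} along the way. It also directly yields the corner distances $\frac{\{pa\}}{p}$ and $\frac{1-\{pa\}}{p}$, which the paper asserts without derivation before Theorem~\ref{thm: covering radius}.

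One correction: your aside that $\delta \neq \frac{1}{2p}$ is needed for distinctness, and is guaranteed by Lemma~\ref{lem: different}, is wrong on both counts. The value $\delta = \frac{1}{2p}$ is allowed; it is exactly the radius-minimizing case of Theorem~\ref{thm: covering radius}, and Lemma~\ref{lem: different} does not exclude it. Your argument never needs it either, since points in different open subintervals are automatically distinct. Just delete that sentence.
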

\begin{proof}
     For $k, l\in \{1,\ldots, p-1\}$ such that $\floor{x_k} \equiv \floor{x_l} \pmod{2}$, we compute 
     \begin{align*}
         p\cdot d(\pi(x_k, 2k), \pi(x_l,2l))&= p\cdot|\{x_k\} -\{x_l\}| \\ 
         &= |2(k-l)q - p(\floor{x_k}-\floor{x_l})|.
     \end{align*}
The right hand side is always even, so the distance between different bouncing points of the same parity must be at least $2/p$. In particular, two even bouncing points (or two odd bouncing points) cannot be in the same subinterval. We conclude that bouncing points in adjacent subintervals alternate in parity. This proves (1).

For (2), without loss of generality, assume $\floor{x_k}$ is even and $\floor{x_l}$ is odd. Then,
    \begin{align*}
         |\{x_k\} + (1-\{x_l\})|&= |(x_k-x_l) -(\floor{x_k}-\floor{x_l}+1)| \\
         &= |2(k-l)q/p-(\floor{x_k}-\floor{x_l}+1)|.
    \end{align*}
The right hand side is an even integer, meaning the midpoint $\frac{1}{2}({x_k}+(1-{x_l}))$ of bouncing points with different parities belongs to $\{\frac{1}{p},\frac{2}{p}, \ldots, \frac{p-1}{p}\}$. In particular, this is true for the midpoint of two bouncing points on consecutive subintervals.
\end{proof}

We are now ready to prove Proposition \ref{prop: symmetry}.

\begin{proof}[Proof of Proposition \ref{prop: symmetry}]

By symmetry of the arguments in Lemmas \ref{lem: different}, \ref{lem: one-in-sub}, and \ref{lem: alternate}, analogous statements hold for bouncing points on the remaining sides of $\mathbf{T}$. We enumerate the bouncing points of $\gamma = \gamma(a,\frac{p}{q})$ along the bottom, top, left, and right sides of $\mathbf{T}$, respectively, as follows:
\begin{align*}
    \mathcal{C}_B &= \{ (h_1,0), (h_2,0), \ldots, (h_{p},0) \},\\
    \mathcal{C}_T &= \{ (h'_1,1), (h'_2, 1), \ldots, (h'_p,1) \},\\
    \mathcal{C}_L &= \{ (0, v_1), (0,v_2), \ldots, (0,v_q) \}, \text{ and}  \\
    \mathcal{C}_R &= \{ (1, v'_1), (1,v'_2), \ldots,(1,v'_q) \}  
\end{align*}
so that 
\begin{itemize}
\item $a = h_{i_0}$, where $i_0 = \lceil{pa}\rceil$,
\item $\frac{i-1}{p} < h_i, h'_i< \frac{i}{p}$ and $\frac{j-1}{q} < v_j, v'_j< \frac{j}{q}$, 
\item $\frac{1}{2}(h_{i}+h_{i+1}) =\frac{i}{p}= \frac{1}{2}(h'_i+h'_{i+1})$ for $i = 1, \ldots, p-1$, and $\frac{1}{2}(v_j+v_{j+1}) =\frac{j}{q}= \frac{1}{2}(v'_j+v'_{j+1})$ for $j = 1, \ldots, q-1$.
\end{itemize}

Every segment of $\gamma$ in $\mathbf{T}$ has slope $\frac{p}{q}$ or $-\frac{p}{q}$, so any two segments either intersect or are parallel. At each bouncing point on $\partial \mathbf{T}$, exactly two segments  meet there. We label the segments as follows: $\lambda_{B,i,+}$ denotes the segment meeting the bottom side at $(h_i,0)$ with slope $+\frac{p}{q}$, and similarly for the other sides using subscripts T, L, and R; see Figure \ref{fig:bouncing-symmetric}. Note that each segment carries two labels, one for each of its endpoints. 

\begin{figure}[h]
    \centering
    \includegraphics[width=0.65\linewidth]{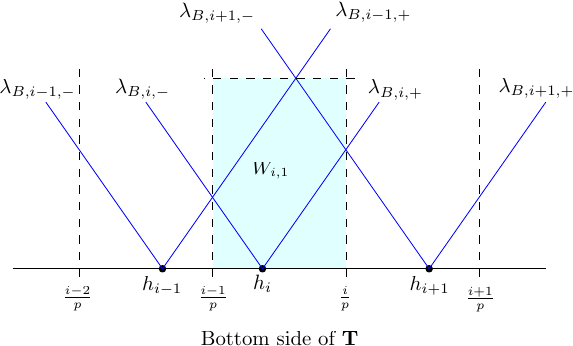}
    \caption{The rectangle $W_{i,1}$ along with some labelled segments of the trajectory $\gamma$}.
    \label{fig:bouncing-symmetric}
\end{figure}

Now consider the rectangle
$$W_{i,1} = \left[\frac{i-1}{p},\frac{i}{p} \right] \times \left[ 0,\frac{1}{q} \right],$$
where $i\in \{1, \ldots, p \}$. For now, let's suppose $i \neq 1,p$. Since $\lambda_{B,i-1,+}$ and $\lambda_{B,i,-}$ have opposite slopes and $x = \frac{i-1}{p}$ is the midpoint of $h_{i-1}$ and $h_i$, these two segments are symmetric about $x = \frac{i-1}{p}$ and meet at a point on that line. Similarly, $\lambda_{B,i,+}$ and $\lambda_{B,i+1,-}$ meet on $x = \frac{i}{p}$. Finally, $\lambda_{B,i-1,+}$ and $\lambda_{B,i+1,-}$ intersect at 
$$x = \frac{h_{i-1}+h_{i+1}}{2}, \qquad y = \frac{p}{q} \cdot \frac{h_{i+1}-h_{i-1}}{2} = \frac{p}{q} \cdot \frac{1}{p} = \frac{1}{q},$$
where the last equality uses the symmetry condition $h_{i+1} - h_{i-1} = \frac{2}{p}$. Hence this intersection lies on the top side of $W_{i,1}$.

Since the slopes of these segments are $\pm \frac{p}{q}$, there are no other intersection points of any segment of $\gamma$ with $\partial W_{i, 1}$, for otherwise there would be more bouncing points on the sides of $\mathbf{T}$ than those listed above. Therefore $W_{i,1} \cap \pi(\ell)$ is a parallelogram formed by two pairs of parallel segments connecting the four sides of $W_{i,1}$.

Moreover, since $\lambda_{B,i,-}$ and $\lambda_{B,i-1,+}$ form an isosceles triangle with base on $y = \frac{1}{q}$, the parallelograms in $W_{i-1,1}$ and $W_{i,1}$ are symmetric about $x = \frac{i-1}{p}$. Similarly, the parallelograms in $W_{i,1}$ and $W_{i+1,1}$ are symmetric about $x = \frac{i}{p}$.

For the boundary cases $i=1$ and $i=p$, the segments $\lambda_{B,0,+}$ and 
$\lambda_{B,p+1,-}$ are not defined. Instead, we use $\lambda_{L,1,+}$ and 
$\lambda_{R,p,-}$ in their respective roles. Since $\lambda_{L,1,+}$ passes 
through both $(0,v_1)$ and $(h_1,0)$, we compute $v_1 = \frac{p}{q}h_1$, and 
similarly, since $\lambda_{R,p,-}$ passes through both $(1,v'_1)$ and $(h_p,0)$, 
we get $v'_1 = \frac{p}{q}(1-h_p)$. The same argument then applies to conclude 
that $\gamma$ intersects $\partial W_{1,1}$ and $\partial W_{p,1}$ in a parallelogram.

We then proceed by induction on the row index $j$. The base case $j = 1$ is proved above. Observe that by the parallelogram structure of row $j$, the intersections of 
$\pi(\ell)$ with $y = \frac{j}{q}$ satisfy exactly one point per subinterval 
$\left(\frac{i-1}{p}, \frac{i}{p}\right)$, symmetric about each $x = \frac{i}{p}$, 
that is, the symmetry conditions of Lemmas \ref{lem: one-in-sub} and 
\ref{lem: alternate}. So we may treat them as the bottom bouncing points. The same argument therefore applies to the rectangles $W_{i,j+1}$, giving a parallelogram in each and the desired symmetry about $y = \frac{j}{q}$. The proposition now follows by induction.
\end{proof}

Proposition \ref{prop: symmetry} allows us to draw out the entire orbit without having to trace the trajectory. Indeed, visualizing the orbit of $\gamma(a, p/q)$ is now a straightforward task using the following steps:
\begin{enumerate}
    \item Divide each horizontal side of $\mathbf{T}$ into $p$ equal subintervals and each vertical side into $q$ equal subintervals. This partitions $\mathbf{T}$ into $pq$ isometric rectangles, namely the $W_{i,j}$'s, each of width $1/p$ and height $1/q$, for $1 \leq i \leq p$ and $1 \leq j \leq q$.
    \item Locate the point $(a,0)$ and draw the parallelogram in $W_{i_0,1}$, 
    where $i_0 = \lceil pa \rceil$, with one vertex at $(a,0)$ and sides of 
    slope $\pm\frac{p}{q}$.
    \item Iteratively reflect this parallelogram about the vertical lines $x = 
    \frac{i}{p}$ and horizontal lines $y = \frac{j}{q}$ until every $W_{i,j}$ 
    contains a parallelogram. The resulting figure is the desired orbit.
\end{enumerate}

\section{Optimal Covering Radius}
Unlike a non-periodic orbit, which has dense orbit in $\mathbf{T}$, a periodic 
orbit is not dense. To answer our two problems, we will need to compute $\rcov(\gamma)$ given the combinatorial data of $\gamma$. 

For the simple case when $\gamma$ is a period-2 trajectory, we have that $\rcov(\gamma)$ is the distance from the orbit to the furthest side of $\mathbf{T}$ that is parallel to $\gamma$.

From now, we will consider trajectories with period at least 4. By the orbit symmetry in Proposition~\ref{prop: symmetry}, it suffices to compute the optimal covering 
radius of a parallelogram inscribed in a single subrectangle $W_{i,j}$, since 
all such subrectangles are isometric.

\begin{lemma}\label{lem: parallelogram} 
Let $ABCD$ be a rectangle and let $PQRS$ be a parallelogram inscribed in $ABCD$ 
with $P$ on side $AB$, such that $\angle QPB = \theta$, as in Figure~\ref{fig: 
inscribed-rectangle}. Then the optimal radius $r$ such that the $r$-neighborhood 
of $PQRS$ covers $ABCD$ is
$$r = \max \{ |AP|\sin \theta,\, |BP| \sin\theta \}.$$
\end{lemma}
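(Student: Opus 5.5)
The covering radius is the largest distance from a point of $ABCD$ to the parallelogram, so we have to locate that maximum. Place coordinates with $A$ at the origin, $B$ on the positive $x$-axis, and $P$ on $AB$. Let $Q$ lie on $BC$ and $S$ on $DA$, with $R$ on $CD$. The sides $PQ$ and $SR$ have direction making angle $\theta$ with $AB$, and $PS$, $QR$ have the mirrored direction, as in the inscribed parallelograms of Proposition~\ref{prop: symmetry}. The complement of $PQRS$ in $ABCD$ consists of four corner triangles, $APS$, $BPQ$, $CQR$, $DRS$, together with the interior of the parallelogram.

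The first step is to reduce to the four triangles. Every point of the closed parallelogram region is at distance at most the largest triangle-type quantity from the boundary, so the interior only needs a comparison at the end. In each corner triangle, say $BPQ$, the farthest point from the segment $PQ$ is a vertex, because the distance to a line is affine and the triangle is convex. The vertex $B$ attains distance $|BP|\sin\theta$ from the line $PQ$. The foot of the perpendicular from $B$ lies on the segment $PQ$, because the angle of the triangle at $B$ is right and the angles at $P$ and $Q$ are acute. Hence the distance from $B$ to the parallelogram, or indeed to the whole orbit, equals $|BP|\sin\theta$, and every other point of that triangle is closer. Similarly, $A$ contributes $|AP|\sin\theta$, since $\angle SPA=\theta$ by the mirrored slopes.

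Next, the triangles at $C$ and $D$ are congruent to those at $A$ and $B$ by the central symmetry of the inscribed parallelogram in the rectangle. This symmetry follows from the opposite sides being parallel with equal slopes $\pm$, which forces $|CR|=|AP|$ and $|DR|=|BP|$. So $C$ and $D$ give the same two values.

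The main obstacle is the interior of $PQRS$. A point inside is at distance at most half the smaller altitude of the parallelogram from the boundary. I would bound this by $\max\{|AP|,|BP|\}\sin\theta$. The distance between the parallel lines $PQ$ and $SR$ equals $|PS'|\sin$ of the appropriate angle; computing it in coordinates gives $(|AP|+|BP|)\sin\theta\cdot$(a factor at most 1) after using $|AB|=|AP|+|BP|$. Half of this is at most the maximum, which is the required bound. Finally, the supremum is attained at the corners, and the neighborhood is open, so $r$ is exactly that maximum. Any smaller radius misses a corner, and this radius covers everything.
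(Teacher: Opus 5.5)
Your plan is the same as the paper's. The four corners of $ABCD$ give the lower bound $\max\{|AP|,|BP|\}\sin\theta$, with $C$ and $D$ handled via $|CR|=|AP|$ and $|DR|=|BP|$. The interior of $PQRS$ is then controlled by half the distance between opposite sides. Your treatment of the corner triangles is more complete than the paper's, which only looks at the four vertices. The clean justification there is convexity: distance to the segment $PQ$ is a convex function, so its maximum over triangle $BPQ$ is at a vertex. Affineness of distance to the \emph{line} alone does not bound distance to the segment.

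The gap is in the step you yourself call the main obstacle: you never compute an altitude of $PQRS$. The phrase ``$|PS'|\sin$ of the appropriate angle'' refers to an undefined point and an unspecified angle. Worse, the asserted bound $d(PQ,SR)\le(|AP|+|BP|)\sin\theta$ is false in general. $PS$ is a transversal between the parallel lines $PQ$ and $SR$ with $\angle SPQ=\pi-2\theta$, and $|AP|=|PS|\cos\theta$ in the right triangle $APS$. Hence
\[
d(PQ,SR)=|PS|\sin 2\theta=2|AP|\sin\theta ,
\]
which exceeds $(|AP|+|BP|)\sin\theta$ whenever $|AP|>|BP|$. Likewise $d(PS,QR)=|PQ|\sin 2\theta=2|BP|\sin\theta$.

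This computation is the substance of the paper's proof. The two half-altitudes are exactly $|AP|\sin\theta$ and $|BP|\sin\theta$. So every interior point lies within $\min\{|AP|,|BP|\}\sin\theta$ of the boundary, which is at most the claimed maximum. Inserting these two lines repairs your argument; your ``smaller altitude'' bound then holds, since $2\min\{|AP|,|BP|\}\le|AP|+|BP|$.

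One small point remains. Because $N_r$ is open, radius exactly $\max\{|AP|,|BP|\}\sin\theta$ does \emph{not} cover the farthest corner. What you have shown is that every larger radius works and no radius up to that value does. That is exactly what the infimum in the definition requires.
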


\begin{figure}[h]
    \centering
    \includegraphics[scale=0.7]{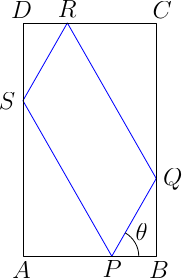}
    \caption{A rectangle $ABCD$ with an inscribed parallelogram $PQRS$}
    \label{fig: inscribed-rectangle}
\end{figure}

\begin{proof} Note that $\angle SPA = \angle DRS =\angle CRQ =\angle QPB = \theta$. 
To cover the rectangle, the radius must be at least the maximum of the distances from the vertices of $ABCD$ to the nearest side of $PQRS$. We compute
\begin{align*}
    d(A, PS) &= |AP|\sin\theta = |RC|\sin\theta =  d(C, RQ), \\ 
    d(B, PQ) &= |BP|\sin\theta = |DR|\sin\theta=  d(D, RS).
\end{align*}
It then remains to compare these with half the perpendicular distance between each 
pair of opposite sides of $PQRS$. Since $|PB| = |PQ|\cos\theta$, we have
\begin{align*}
    d(PS, QR) &= |PQ|\sin(\pi-2\theta) \\
                &= |PQ|\sin2\theta, \\
                &= 2(|PQ|\cos\theta)\sin\theta \\
                &= 2|PB|\sin\theta.
\end{align*}
Therefore,
$$\frac{1}{2}d(PS,QR) = |PB|\sin\theta \leq \max\{|AP|\sin\theta, |PB|\sin\theta\}$$
and similarly $\frac{1}{2}d(PQ,RS) = |AP|\sin\theta \leq \max\{|AP|\sin\theta, 
|PB|\sin\theta\}$. The corner distances therefore dominate the half-gaps between 
opposite sides. Clearly, the maximum of these distances is the optimal covering radius, that is  
$$r = \max\{|AP|\sin\theta,\, |BP|\sin\theta\}$$
as claimed.
\end{proof}

Given $a \in (0,1)$ and relatively prime positive integers $p,q$, by 
Proposition~\ref{prop: symmetry}, the orbit of $\gamma(a,\frac{p}{q})$ 
intersects each subrectangle $W_{i,j}$ in a parallelogram with the two corner distances being $\frac{\{pa\}}{p}$ and $\frac{1-\{pa\}}{p}$. Applying 
Lemma~\ref{lem: parallelogram} and noting that $\sin\theta = \frac{p}{\sqrt{p^2+q^2}}$,  
we obtain the answer for \ref{prob1} as follows.
\begin{theorem}\label{thm: covering radius}
Let $\mathcal{C}_N(p)$ denote the class of non-singular periodic trajectories of period $N$ 
with $p$ bouncing points on the bottom edge and $q$ bouncing points on the left edge so that $2(p+q) = N$. For $\gamma = \gamma(a,\frac{p}{q}) 
\in \mathcal{C}_N(p)$,
\begin{equation}
    \rcov(\gamma(a,\tfrac{p}{q})) = \frac{1}{\sqrt{p^2+q^2}}\max(\{pa\},\, 1-\{pa\}).
\end{equation}
Moreover, it follows that in the class $\mathcal{C}_N(p)$, the covering 
radius is minimized when $a \in \left\{\frac{1}{2p}, \frac{3}{2p}, \ldots, 
\frac{2p-1}{2p}\right\}$, giving
\begin{equation}\label{eq: min-rcov}
    \min_{\gamma \in \mathcal{C}_N(p)} \rcov(\gamma) = \frac{1}{2\sqrt{p^2+q^2}}.
\end{equation}
These radius-minimizing trajectories correspond to those starting at the midpoint of each of the subintervals $\left[\frac{i}{p}, \frac{i+1}{p}\right] \times \{0\}$ on the bottom edge.

\end{theorem}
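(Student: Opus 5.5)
The plan is to reduce to a single cell using Proposition~\ref{prop: symmetry}, then apply Lemma~\ref{lem: parallelogram} inside that cell, and finally optimize over $a$. Since $\mathbf{T}=\bigcup_{i,j}W_{i,j}$ and $\gamma\cap W_{i,j}$ is a parallelogram inscribed in $W_{i,j}$, we have $N_r(\gamma)\supset\mathbf{T}$ as soon as $N_r(\gamma\cap W_{i,j})\supset W_{i,j}$ for every $i,j$. This gives an upper bound for $\rcov(\gamma)$.

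For the matching lower bound, I would use the corner points of each $W_{i,j}$. Any segment of $\gamma$ outside $W_{i,j}$ that approaches a corner of $W_{i,j}$ must cross $\partial W_{i,j}$. Its crossing points are vertices of the inscribed parallelograms, and by the reflection symmetry these lie at the same distance pattern. So the nearest point of the orbit to a corner of $W_{i,j}$ still lies on the inscribed parallelogram sides through that corner's adjacent vertices. The quickest way to see this is to note that the corner distances are realized perpendicularly to segments of slope $\pm p/q$ lying inside the cell. Hence $\rcov(\gamma)$ equals the single-cell radius from Lemma~\ref{lem: parallelogram}. This exchange between the local and global problem is the one delicate point, and I expect it to be the main obstacle. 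I would handle it by noting that each corner of $W_{i,j}$ is also a corner of its neighbours, which carry mirror-image parallelograms, so the distance from the corner to the orbit is the minimum of equal quantities.

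Next I compute the data in Lemma~\ref{lem: parallelogram} for the cell $W_{i_0,1}$ with $i_0=\lceil pa\rceil$. The vertex $P=(a,0)$ splits the bottom side into pieces of lengths $|AP|=a-\frac{i_0-1}{p}=\frac{\{pa\}}{p}$ and $|BP|=\frac{1-\{pa\}}{p}$, using $a\notin\frac1p\mathbb{Z}$. The angle between a segment of slope $p/q$ and the horizontal satisfies $\sin\theta=\frac{p}{\sqrt{p^2+q^2}}$. Substituting gives $\rcov=\frac{1}{\sqrt{p^2+q^2}}\max(\{pa\},1-\{pa\})$. By the reflection symmetry, every other cell yields the same pair of lengths, possibly swapped, and therefore the same maximum.

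Finally, within $\mathcal{C}_N(p)$ both $p$ and $q=N/2-p$ are fixed, so we only need to minimize $\max(t,1-t)$ over $t=\{pa\}\in(0,1)$. This maximum is at least $\tfrac12$, with equality exactly when $t=\tfrac12$, that is, when $a=\frac{2k-1}{2p}$. These values of $a$ are not of the form $k/p$, so the corresponding trajectories are non-singular by Theorem~\ref{thm: classification} and are the midpoints of the subintervals. This gives \eqref{eq: min-rcov}.
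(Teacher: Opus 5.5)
Your proposal is correct and follows the paper's proof: reduce to one cell via Proposition~\ref{prop: symmetry}, apply Lemma~\ref{lem: parallelogram} with $|AP|=\{pa\}/p$, $|BP|=(1-\{pa\})/p$ and $\sin\theta=p/\sqrt{p^2+q^2}$, then minimize $\max(t,1-t)$ at $t=\tfrac12$. The paper passes over the local-to-global lower bound you flag (it just says one cell suffices). Your closing mirror-image argument at shared grid corners settles it once you add that cells not containing the corner lie at distance at least $\min(1/p,1/q)>1/\sqrt{p^2+q^2}$ from it; the earlier claim that nearby segments must cross $\partial W_{i,j}$ is unnecessary, and it actually fails for the diagonally adjacent cell.
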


\begin{remark}

For readers familiar with the torus, by identifying opposite sides of the $2 \times 2$ square, a billiard path in the square lifts to a geodesic on the torus $\mathbb{R}^2/2\mathbb{Z}^2$. Under this identification, the optimal covering radius $\rcov(\gamma)$ corresponds to the smallest $r$ such that the $2r$-neighborhood of the geodesic covers the torus. Our results therefore give explicit covering radii for periodic geodesics on this flat torus.

\end{remark}

Our answer to \ref{prob2} is the following theorem.

\begin{theorem}
Let $r>0$ and let $M$ be the smallest positive integer greater than $\dfrac{1}{4r^2}$ that can be written as sum of two squares of relatively prime integers. Then, any shortest periodic path $\gamma$ such that $N_{r}(\gamma)$ covers $\mathbf{T}$ has the form $\gamma(a,\frac{p}{q})$ where $p^2+q^2 = M$, $\gcd(p,q) = 1$, and $a \in \left\{\frac{1}{2p}, \frac{3}{2p}, \ldots, 
\frac{2p-1}{2p}\right\}$.
\end{theorem}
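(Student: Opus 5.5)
The plan is to reduce everything to two quantities: the length of $\gamma(a,\tfrac{p}{q})$ and the covering criterion supplied by Theorem~\ref{thm: covering radius}.

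\textbf{Step 1: length.} I will compute the length of one period by unfolding. Over one period, Theorem~\ref{thm: structure} gives $2p$ bounces on the vertical sides and $2q$ on the horizontal ones. So the unfolded ray $\ell(a,\tfrac pq)$ advances $2q$ horizontally and $2p$ vertically. The length is therefore
$$L(\gamma)=2\sqrt{p^2+q^2},$$
which does not depend on $a$. Period-$2$ trajectories have length $2$, and I will treat them as the degenerate case $(p,q)\in\{(1,0),(0,1)\}$, so that $p^2+q^2=1$ and the midpoint is $a=\tfrac12$.

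\textbf{Step 2: the covering criterion.} Since $\mathbf{T}$ is compact and the distance to the orbit is continuous, its maximum over $\mathbf{T}$ is attained and equals $\rcov(\gamma)$. Hence the open neighbourhood $N_r(\gamma)$ contains $\mathbf{T}$ if and only if $r>\rcov(\gamma)$. By Theorem~\ref{thm: covering radius}, this is equivalent to
$$\max(\{pa\},1-\{pa\})<r\sqrt{p^2+q^2}.$$
The left side is at least $\tfrac12$, so a covering path with data $(p,q)$ exists if and only if $p^2+q^2>\tfrac{1}{4r^2}$. In that case the midpoints $a\in\{\tfrac{1}{2p},\dots,\tfrac{2p-1}{2p}\}$ work, which also uses \eqref{eq: min-rcov}.

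\textbf{Step 3: minimal length.} Minimizing $2\sqrt{p^2+q^2}$ subject to $\gcd(p,q)=1$ and $p^2+q^2>\tfrac1{4r^2}$ forces $p^2+q^2=M$, by the definition of $M$. This pins down the direction of any shortest covering path.

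\textbf{Step 4: the midpoint condition (main obstacle).} By Step 1 the length does not see $a$. So for a given $r$ with $r\sqrt M>\tfrac12$ strictly, every $a$ with $\max(\{pa\},1-\{pa\})<r\sqrt M$ also gives a shortest covering path. The midpoint conclusion therefore cannot come from length alone, and I will derive it from the dependence on $r$.

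Let $M'$ be the previous integer coprimely representable as a sum of two squares. For every $r$ in the interval
$$\left(\tfrac{1}{2\sqrt M},\tfrac{1}{2\sqrt{M'}}\right],$$
the same $M$ occurs. As $r\downarrow \tfrac{1}{2\sqrt M}$, the admissible $a$ are squeezed to the set where $\max(\{pa\},1-\{pa\})=\tfrac12$, that is, exactly the midpoints. Thus the midpoints are precisely the shortest paths that cover for every blade radius producing this $M$. Equivalently, they are the shortest paths that minimize $\rcov$, by \eqref{eq: min-rcov}.

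I would write the theorem's final clause in this form: a shortest path that covers for all such $r$, or a shortest path of minimal covering radius. I would then verify that $\{pa\}=\tfrac12$ holds if and only if $a$ is one of the listed midpoints; this is immediate from $a\in(0,1)$.

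As literally worded, for a single fixed $r$ with $r\sqrt M>\tfrac12$, the clause is false: non-midpoint $a$ close enough to a midpoint also give shortest covering paths. The bulk of the write-up is Steps 1–3. The real work is making this interpretation of Step 4 precise, because without it the last clause of the statement does not hold.
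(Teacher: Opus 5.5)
Your Steps 1--3 are the paper's argument. The paper also uses, implicitly, that a primitive orbit with data $(p,q)$ has length $2\sqrt{p^2+q^2}$. It bounds $\rcov$ below by $\frac{1}{2\sqrt{p^2+q^2}}$ and minimizes $p^2+q^2$ to reach $M$. Your version is more careful in two places:
\begin{itemize}
\item You use the strict inequality $r>\rcov(\gamma)$ that the open neighbourhood forces. The paper writes $r\ge\rcov(\gamma)$ and then silently passes to ``greater than $\frac{1}{4r^2}$''.
\item You account for period-$2$ orbits through $M=1=1^2+0^2$. The paper's restriction to positive $p,q$ misses this case when $r>\frac12$. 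Strictly speaking, a horizontal period-$2$ orbit is not of the form $\gamma(a,\cdot)$ at all, since $\gamma(a,\tfrac01)$ would run along the bottom edge. So for $(0,1)$ the statement holds only up to a symmetry of the square.
\end{itemize}

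Your Step 4 objection is correct. It is a defect of the statement and of the paper's proof, not of your proposal. The paper disposes of the midpoint clause in one line, citing that midpoints minimize $\rcov$ within $\mathcal{C}_N(p)$. That shows midpoint trajectories are \emph{among} the shortest covering paths, not that they are the only ones.

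Here is a concrete counterexample. Take $r=0.3$, so $\frac{1}{4r^2}\approx 2.78$. Then $M=5$, because $3$ and $4$ are not sums of two coprime squares.
\begin{itemize}
\item The trajectory $\gamma(0.4,\tfrac12)$ is non-singular and has length $2\sqrt5$.
\item Its covering radius is $\rcov=\frac{0.6}{\sqrt5}\approx 0.268<0.3$, so it covers $\mathbf{T}$.
\item No periodic path with $p^2+q^2\in\{1,2\}$ covers at this radius: period $2$ needs $r>\frac12$, and $(1,1)$ needs $r>\frac{1}{2\sqrt2}$.
\end{itemize}
So $\gamma(0.4,\tfrac12)$ is a shortest covering path with $a=0.4\neq\frac12$.

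Also note that $M>\frac{1}{4r^2}$ is strict by definition, so $r\sqrt M>\frac12$ for \emph{every} $r>0$. The clause therefore fails for all $r$, not just for those with $r\sqrt M>\frac12$ as your wording suggests. Your two reformulations are correct versions of the statement, and your argument for them is sound:
\begin{itemize}
\item shortest paths that cover for every $r$ in $\left(\frac{1}{2\sqrt M},\frac{1}{2\sqrt{M'}}\right]$;
\item shortest covering paths of minimal $\rcov$.
\end{itemize}

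There is one harmless slip in Step 1. You assign $2p$ bounces to the vertical sides and then say the ray advances $2q$ horizontally, which is inconsistent. The paper's Theorem~\ref{thm: structure} has the sides swapped relative to Section~3, where Lemma~\ref{lem: one-in-sub} puts $p$ bounces on the bottom edge. The correct count is $p$ bounces on each horizontal side and $q$ on each vertical side. That matches your advances of $2q$ horizontally and $2p$ vertically, and the length $2\sqrt{p^2+q^2}$ is unaffected.
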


\begin{proof}
For a given radius $r>0$, we seek the shortest periodic trajectory $\gamma$ such that $N_r(\gamma) \supset \mathbf{T}$. By the above, we need relatively prime positive integers $p$ and $q$ satisfying
$$r \geq \rcov(\gamma) \geq \frac{1}{2\sqrt{p^2+q^2}}$$
with $2\sqrt{p^2+q^2}$ minimal. This is equivalent to finding the smallest integer greater than $\dfrac{1}{4r^2}$ expressible  as a sum of two squares of relatively prime integers. The claim about the starting point $a$ follows from the observation preceding Equation \ref{eq: min-rcov} in Theorem \ref{thm: covering radius}.
\end{proof} 

This sum of squares criterion is easy to verify in practice. A classical theorem from number theory states that $M$ is 
properly represented as a sum of two squares of coprime integers if and only if $M$ has no prime factor congruent to $3 \pmod{4}$ and is not divisible by $4$, see \cite[Chapter VI.5]{Davenport}. To find such $p, q$ explicitly, one may apply Cornacchia's algorithm; see \cite[Algorithm~1.5.2]{Cohen} and also  \cite{Basilla}.

\end{document}